\newcommand{\doi}[1]{\url{https://doi.org/#1}}
\newcommand{\isbn}[1]{\url{https://isbnsearch.org/isbn/#1}}
\newcommand{\arxiv}[1]{\href{https://arxiv.org/abs/#1}{preprint arXiv:#1}}
\newcommand{\web}[1]{\url{#1}}
\newtheorem{thm}{Theorem}
\newtheorem{lemma}[thm]{Lemma}
\newtheorem{prop}[thm]{Proposition}
\newtheorem{rem}[thm]{Remark}
\newtheorem{cor}[thm]{Corollary}
\newtheorem{df}[thm]{Definition}
\newcommand{\A}{\mathcal{A}}
\newcommand{\C}{\mathbb{C}}
\newcommand{\Z}{\mathbb{Z}}
\newcommand{\N}{\mathbb{N}}
\newcommand{\R}{\mathbb{R}}
\newcommand{\inner}[1]{\left<#1\right>}
\renewcommand{\hbar}{\hslash}
\begin{document}
\leftmargini=2em

\title{\vspace*{-1cm}Isomorphisms of quantum spheres}

\author[F.~D'Andrea]{\vspace*{-5mm}Francesco D'Andrea}

\address{Dipartimento di Matematica e Applicazioni ``R.~Caccioppoli'' \\ Universit\`a di Napoli Federico II \\
Complesso MSA, Via Cintia, 80126 Napoli, Italy}

\subjclass[2020]{Primary: 16T20; Secondary: 20G42.}

\begin{abstract}
For $n\in\N$ and $q\in\rinterval{0}{1}$, the Vaksman-Soibelman quantum sphere $S^{2n+1}_q$ is described by an associative *-algebra $\A(S^{2n+1}_q)$ deforming the algebra of polynomial functions on the $2n+1$ dimensional unit sphere.
Its C*-enveloping algebra is known to be independent of the deformation parameter $q$. In contrast to what happens in the C*-algebraic setting, we show here that, for all $q,q'$ in the above range, $\A(S^{2n+1}_q)$ is isomorphic to $\A(S^{2n+1}_{q'})$ if and only if $q=q'$. From a geometric point of view, this means that quantum spheres corresponding to different values of the deformation parameter are isomorphic as noncommutative topological spaces but not as noncommutative algebraic varieties.
\end{abstract}

\maketitle

\section{Introduction}

The idea of noncommutative geometry is to characterize spaces by their commutative algebras of functions and, inspired by quantum mechanics, to look at noncommutative algebras as dual objects of certain virtual ``noncommutative'' or ``quantum'' spaces \cite{Con94}. This idea has many different manifestations. The two that are relevant to us are the following.
In algebraic geometry, the category of affine schemes is equivalent to the opposite category of commutative rings. In topology, the category of compact Hausdorff spaces is equivalent to the opposite category of commutative unital C*-algebras.

We are interested in (non-commutative) associative unital *-algebras over $\C$,
and our typical examples will be quotients of free algebras by ideals generated by non-commutative polynomials. Motivated by the above considerations, we think of such an algebra as describing a noncommutative algebraic variety, and interpret its C*-enveloping algebra (if it exists) as describing the underlying noncommutative topology.

\smallskip

The prototypical example is given by the Weyl algebra (the canonical commutation relation of quantum mechanics), given by
\[
\A(\R^2_\hbar):=\C\!\inner{p,q}/\inner{pq-qp+\mathrm{i}\hbar 1} ,
\]
with involution defined by $p^*=p$ and $q^*=q$, and with $\hbar\in\R$ the deformation parameter.
It is not difficult to prove that $\A(\R^2_{\hbar})$ and $\A(\R^2_{\hbar\hspace{-1pt}'})$ are isomorphic for all $\hbar,\hbar\hspace{-1pt}'\in\R\smallsetminus\{0\}$ (it is a simple exercise to construct the explicit isomorphism).

\smallskip

The next example is the algebra $\A(\mathbb{T}^2_\theta)$ of the noncommutative torus, where $\theta\in\R$ is the deformation parameter.
This is generated by two unitaries $u$ and $v$ with commutation relation
\begin{equation}\label{eq:UV}
uv-e^{2\pi i\theta}vu \,.
\end{equation}
Since the relation \eqref{eq:UV} only depends on the fractional part of $\theta$, and there is an obvious isomorphism $\A(\mathbb{T}^2_\theta)\cong \A(\mathbb{T}^2_{1-\theta})$ (exchanging $u$ and $v$), without loss of generality we can assume that $\theta\in\interval{0}{\frac{1}{2}}$.
We denote by $C(\mathbb{T}^2_\theta)$ the C*-enveloping algebra.

For $\theta,\theta'\in\interval{0}{\frac{1}{2}}$, it is known that 
$C(\mathbb{T}^2_\theta)$ and $C(\mathbb{T}^2_{\theta'})$ are isomorphic only if $\theta=\theta'$ (this is true both for rational and irrational deformation parameter, see e.g.~\cite[Sect.~12.2]{GBVF01} and references therein).
Since isomorphic *-algebras would have isomorphic C*-enveloping algebras, we deduce that 
the polynomial algebras $\A(\mathbb{T}^2_\theta)$ and $\A(\mathbb{T}^2_{\theta'})$ are also not isomorphic if $\theta\neq\theta'$.

\smallskip

We finally arrived at the class of examples studied in this paper: the odd-dimensional quantum spheres of Vaksman-Soibelman. For $n\in\N$ and $q\in\interval{0}{1}$ a deformation parameter, 
the $2n+1$-dimensional quantum sphere $S^{2n+1}_q$ is described by a filtered quadratic algebra
$\A(S^{2n+1}_q)$ that was introduced in \cite{VS91} as quantum homogeneous spaces of the quantum unitary groups
(cf.~Sect.~\ref{sec:1} for the definition).
For $n=0$, one gets the algebra of polynomial functions on the unit circle,
and for $n=1$ the quantum space underlying Woronowocz's quantum $SU(2)$ group.
The ``classical limit'' corresponds to the value $q=1$ of the parameter.

The C*-enveloping algebra $C(S^{2n+1}_q)$ is independent of $q$, for all $0\leq q<1$. This was originally proved by Sheu in \cite{She97} using groupoid \mbox{C*-algebras}. Later on, Hong and Szyma{\'n}ski in \cite{HS02} gave an independent proof using graph C*-algebras. For a proof using graph C*-algebras one can see also the survey \cite{Dan23}, while the connection between the graph and the groupoid pictures is discussed in \cite{Dan24a}.

Thus, quantum spheres corresponding to different values of the deformation parameter are isomorphic as noncommutative topological spaces.
It is only natural to wonder whether they are also isomorphic as noncommutative algebraic varieties, i.e.~whether
the *-algebra $\A(S^{2n+1}_q)$ and $\A(S^{2n+1}_{q'})$ are isomorphic for $q\neq q'$.
The difficulty of constructing an isomorphism between two such algebras suggests that the answer to this question is negative. Surprisingly, there is no proof of this result in the literature (there is a proof in \cite{Kra08} that, for a fixed $q$, the algebra $\A(S^2_{q,s})$ of a Podle\'s sphere does depend on the additional parameter $s$, but this proof cannot be directly applied to our case).
The aim of this paper is to fill this gap.

\medskip

Let us fix the notation and terminology: in this paper, $\N$ is the set of natural numbers (including $0$), empty sums are zero, by an \emph{algebra} we mean a unital associative \mbox{*-algebra} over the field of complex numbers, by an \emph{ideal} we mean a two-sided *-ideal, by \emph{homomorphism} we mean a unital *-homomorphism.

\section{The Theorem}\label{sec:1}

Let $n\in\N$ and $q\geq 0$. We define $\A(S^{2n+1}_q)$ as the universal algebra generated  by the elements $\{z_i,z_i^*\}_{i=0}^n$ with relations:
\begin{align}
z_jz_i &=qz_iz_j &&\forall\;0\leq i<j\leq n \;, \label{eq:qsphereA} \\
z_i^*z_j &=qz_jz_i^* &&\forall\;0\leq i\neq j\leq n \;, \label{eq:qsphereB} \\
z_i^*z_i &=z_iz_i^*+(1-q^2)\sum\nolimits_{j=i+1}^n z_jz_j^* 
    &&\forall\;0\leq i\leq n \;,\label{eq:qsphereC} \\
z_0z_0^*+z_1z_1^* &+\ldots+z_nz_n^*=1 \;. \label{eq:qsphereE}
\end{align}
Concretely, $\A(S^{2n+1}_q)$ is the quotient of the free algebra $\C\inner{\{z_i,z_i^*\}_{i=0}^n}$ by the ideal defined by the above relations. As the notation suggests, the \mbox{*-involution} is the map defined on generators by $z_i\mapsto z_i^*$ for all $0\leq i\leq n$.

For $q=0$, $\A(S^{2n+1}_q)$ is an example of Leavitt path algebra \cite{HS02,Dan23}.
When $q=1$, (\ref{eq:qsphereA}-\ref{eq:qsphereC}) tell us that the generators commute with each other, and we can think of $\A(S^{2n+1}_1)$ as the algebra of polynomial functions on the unit sphere in $\C^{n+1}$, the set of solutions of the equation \eqref{eq:qsphereE}. For $n=0$, $\A(S^1_q)$ is independent of $q$ and isomorphic to the algebra $\A(S^1)$ of polynomial functions on the unit circle. We can then focus on $n\geq 1$.

Even if the definition is valid for every non-negative number $q$, it is well-known that $\A(S^{2n+1}_q)$ is isomorphic to $\A(S^{2n+1}_{q^{-1}})$ for all $q>0$ (the reader can work out the explicit isomorphism as an exercise). For every $q\neq 1$ (and $n\geq 1$), the algebra $\A(S^{2n+1}_q)$ is non-commutative, hence not isomorphic to $\A(S^{2n+1}_1)$. This leaves us with the family of algebras with $n\geq 1$ and parameter $0\leq q<1$. The question is whether two algebras in this family can be isomorphic.

The rest of the paper is devoted to the proof of the following theorem.

\begin{thm}\label{thm:main}
Let $n\geq 1$, $q,q'\in\rinterval{0}{1}$ and $q\neq q'$. Then, the algebras $\A(S^{2n+1}_q)$ and $\A(S^{2n+1}_{q'})$ are not isomorphic.
\end{thm}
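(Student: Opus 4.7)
The plan is to extract the parameter $q$ from $\A = \A(S^{2n+1}_q)$ via a ``conormal'' bimodule attached to the abelianization. The abelianization $\A^{ab}$ itself does not depend on $q$, but the first-order thickening $I/I^2$ of the abelianization ideal $I$ carries a bimodule twist that recovers $q$.

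First, I would compute the *-abelianization $\A^{ab}:=\A/[\A,\A]$ (with its inherited *-structure, since the commutator ideal satisfies $[a,b]^{*}=[b^{*},a^{*}]$). For $q\neq 1$, combining $z_jz_i=qz_iz_j$ with commutativity forces $(1-q)z_iz_j=0$, so $z_iz_j=0$ in $\A^{ab}$ for $i\neq j$; similarly $z_i^{*}z_j=0$ for $i\neq j$. Relation (\ref{eq:qsphereC}) then yields $(1-q^2)\sum_{j>i}z_jz_j^{*}=0$, and telescoping gives $z_jz_j^{*}=0$ for all $j\geq 1$; the sphere relation (\ref{eq:qsphereE}) forces $z_0z_0^{*}=1$, and then $z_j=z_j(z_0z_0^{*})=0$ for $j\geq 1$. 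Hence $\A^{ab}\cong\C[z_0,z_0^{*}]/(z_0z_0^{*}-1)\cong\C[S^1]$, independent of $q$. The kernel $I=\ker(\A\to\A^{ab})$ is the *-ideal generated by $\{z_j,z_j^{*}:1\leq j\leq n\}$, and since abelianization is functorial, any *-iso $\A(S^{2n+1}_q)\to\A(S^{2n+1}_{q'})$ restricts to a bijection $I\to I'$ and induces a *-iso of abelianizations.

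Next, I would analyze the $\C[S^1]$-bimodule $I/I^2$. Writing $t$ for the image of $z_0$, relations (\ref{eq:qsphereA}) and (\ref{eq:qsphereB}) yield, modulo $I^2$, the identities
\[
z_j\cdot t=q\,t\cdot z_j,\qquad z_j^{*}\cdot t=q\,t\cdot z_j^{*}\qquad(1\leq j\leq n).
\]
So each $z_j$ and each $z_j^{*}$ generates a cyclic sub-bimodule isomorphic to the twisted line bundle $M_q$ --- namely $\C[t,t^{-1}]$ with standard left multiplication and right action twisted by the algebra automorphism $\sigma_q(t)=qt$. A PBW-type basis argument for $\A(S^{2n+1}_q)$ shows that these $2n$ cyclic bimodules sum directly and admit no additional relations, giving $I/I^2\cong M_q^{\oplus 2n}$.

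I would then show that the isomorphism class of $M_q$ as a $\C[S^1]$-bimodule determines $q$ up to the symmetry $q\leftrightarrow q^{-1}$. A bimodule iso $M_{q_1}\to M_{q_2}$ covering the identity on $\C[S^1]$ sends the cyclic generator to some $g\in\C[t,t^{-1}]$ with $\sigma_{q_1}(r)\,g=g\,\sigma_{q_2}(r)$ for all $r$; since $\C[t,t^{-1}]$ is a domain and $g\neq 0$, this forces $\sigma_{q_1}=\sigma_{q_2}$, i.e.\ $q_1=q_2$. The *-automorphisms of $\C[S^1]$ are exhausted by $t\mapsto\zeta t$ and $t\mapsto\zeta t^{-1}$ ($\zeta\in\mathbb{T}$); conjugating $\sigma_q$ by the former fixes it, while the latter sends it to $\sigma_{q^{-1}}$. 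Hence the invariant extracted from $I/I^2$ is the unordered pair $\{q,q^{-1}\}$, and for $q,q'\in(0,1)$ the equality $\{q,q^{-1}\}=\{q',q'^{-1}\}$ forces $q=q'$. The boundary case $q=0$ is settled separately: the relation $z_jz_0=0$ combined with the sphere relation gives $z_j=z_j(z_0z_0^{*}+\sum_{k\geq 1}z_kz_k^{*})=\sum_{k\geq 1}z_jz_kz_k^{*}\in I^2$, so $I/I^2=0$, distinguishing $q=0$ from every $q'\in(0,1)$.

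The main obstacle I expect is the technical step in the second paragraph: verifying that the natural spanning set of $I/I^2$ is actually a basis --- i.e., that the cyclic bimodules $\C[S^1]\cdot z_j\cdot\C[S^1]$ are independent and admit no relations beyond the prescribed twist. This amounts to invoking a PBW theorem for $\A(S^{2n+1}_q)$ and computing the associated graded with respect to the $I$-adic filtration. Once linear independence is secured, extracting $q$ from the twisted bimodule $M_q$ is a short commutative-algebra argument.
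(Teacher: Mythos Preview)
Your approach is correct and genuinely different from the paper's. The paper argues by contradiction: composing a hypothetical isomorphism $\A(S^{2n+1}_q)\to\A(S^{2n+1}_{q'})$ with the quotient onto $\A(SU_{q'}(2))$, it uses an explicit descending filtration $V_m$ of $\A(SU_{q'}(2))$ (built from the PBW basis) and an inductive $q$-commutator computation to force $\varphi(z_i)\in\bigcap_m V_m=0$ for all $i\geq 1$, so that $\varphi$ would factor through the commutative abelianization onto a noncommutative algebra. Your argument instead extracts $q$ directly as a structural invariant: on the conormal bimodule $I/I^2$ the single relation $m\cdot t=q\,t\cdot m$ holds for every element, and the $*$-automorphism group of $\C[S^1]$ is small enough that this pins down $\{q,q^{-1}\}$. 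Your treatment of $q=0$ via $I=I^2$ is also cleaner than anything in the paper.

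One simplification you should notice: you do \emph{not} need the full decomposition $I/I^2\cong M_q^{\oplus 2n}$, nor a PBW theorem for $\A(S^{2n+1}_q)$. Since the twist relation $m\cdot t=q\,t\cdot m$ propagates from the generators $[z_j],[z_j^*]$ to all of $I/I^2$ (using commutativity of $\C[S^1]$), the only thing you must check is $I/I^2\neq 0$ for $0<q<1$. This follows from the surjection $\A(S^{2n+1}_q)\twoheadrightarrow\A(SU_q(2))$: it maps $I_n$ onto the commutator ideal $V_1$ and $I_n^2$ onto $V_1^2=V_2$, hence induces a surjection $I_n/I_n^2\twoheadrightarrow V_1/V_2$, and the latter is nonzero by the standard PBW basis of $\A(SU_q(2))$. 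So your ``main obstacle'' reduces to the same classical input the paper uses.
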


\section{The Proof}

From this point on, we will make the following assumptions:
\[
n\geq 1, \quad q,q'\in\rinterval{0}{1} \,,
\]
and whenever we write $q\neq 0$, we mean $0<q<1$.

\smallskip

We need some preliminary results on the algebra of the quantum $SU(2)$ group, which we denote by
 $\A(SU_q(2))$. This algebra is generated by four elements $\alpha,\beta,\alpha^*,\beta^*$ with relations
\begin{gather*}
\beta\alpha =q\alpha\beta ,\qquad
\beta^*\alpha =q\alpha\beta^* , \qquad
[\beta,\beta^*]=0 \\
\alpha\alpha^*+\beta\beta^*=\alpha^*\alpha+q^2\beta\beta^*=1 .
\end{gather*}
Note that these are the same relations of $\A(S^3_q)$, up to renaming the generators. However, it is convenient to introduce different notations to distinguish its generators from those of $\A(S^{2n+1}_q)$ for a general $n\geq 1$.

A surjective $*$-homomorphism $\A(S^{2n+1}_q)\to\A(SU_q(2))$ is defined on generators by
\[
z_0\mapsto\alpha , \qquad z_1\mapsto\beta , \qquad z_i\mapsto 0\;\forall\;i\geq 2 .
\]

\smallskip

For $j\in\Z$ and $k,l\in\N$, let us define
\begin{equation}\label{eq:ejkl}
e_{j,k,l}:=\begin{cases}
\alpha^j\beta^k(\beta^*)^l & \text{if }j\geq 0,\\
\beta^k(\beta^*)^l(\alpha^*)^{-j} & \text{if }j<0 .
\end{cases}
\end{equation}
If $q\neq 0$, the family $(e_{j,k,l})_{j\in\Z,k,l\in\N}$ is a basis of the complex vector space $\A(SU_q(2))$ (see e.g.~\cite[Prop.~4.4]{KS97}).

\begin{rem}
Note that the elements \eqref{eq:ejkl} are not linearly independent for $q=0$, since in this case it follows from the relation
$\alpha^*\alpha=1$ that $\alpha$ is an isometry, and $\beta\beta^*=1-\alpha\alpha^*$ is an idempotent. Thus, $\beta^k(\beta^*)^k=(\beta\beta^*)^k=\beta\beta^*$ for all $k\geq 1$.
\end{rem}

We now prove a series of results that involve the basis \eqref{eq:ejkl} and are valid if $q\neq 0$.
All the claims from here to Remark \ref{rem:4.5} are only valid for $q\neq 0$.

\begin{df}
For $m\in\N$, we define \ $V_m:=\mathrm{Span}\big\{ e_{j,k,l} : k+l\geq m \big\}$ .
\end{df}

We have a descending filtration
\[
\A(SU_q(2))=V_0\supseteq V_1\supseteq V_2\supseteq \ldots
\]
and clearly
\begin{equation}\label{eq:intersection}
\bigcap_{m\geq 0}V_m=0 .
\end{equation}

Despite being elementary to proof, it is useful to collect the following formulas in a Lemma for future reference.

\begin{lemma}\label{lem:aas}
For all $j,k\in\Z_+$, if $j\geq k$:
\[
\alpha^j(\alpha^*)^k=
\alpha^{j-k}\prod_{p=0}^{k-1}(1-q^{-2p}\beta\beta^*) ,
\]
and if $j<k$:
\[
\alpha^j(\alpha^*)^k=
(\alpha^*)^{k-j}\prod_{p=k-j}^{k-1}(1-q^{-2p}\beta\beta^*) .
\]
\end{lemma}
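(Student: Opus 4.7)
The proof reduces to manipulating the single quadratic relation $\alpha\alpha^*=1-\beta\beta^*$ together with the commutation rules that govern how $\beta\beta^*$ moves past $\alpha$ and $\alpha^*$. I will first extract these rules, then handle the case $j\ge k$ by induction on $k$, and finally deduce the case $j<k$ by taking adjoints.

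\textbf{Step 1: Auxiliary commutation relations.} From $\beta\alpha=q\alpha\beta$ and $\beta^*\alpha=q\alpha\beta^*$, adjunction yields $\alpha^*\beta=q\beta\alpha^*$ and $\alpha^*\beta^*=q\beta^*\alpha^*$. A direct computation then gives
\[
\alpha(\beta\beta^*)=q^{-2}(\beta\beta^*)\alpha,
\qquad
\alpha^*(\beta\beta^*)=q^{2}(\beta\beta^*)\alpha^*,
\]
so that for any polynomial $f$ in one variable,
\[
f(\beta\beta^*)\,\alpha^*=\alpha^*\,f(q^{-2}\beta\beta^*),
\qquad
f(\beta\beta^*)\,\alpha=\alpha\,f(q^{2}\beta\beta^*).
\]
This is the only real computation needed; everything else is bookkeeping.

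\textbf{Step 2: Case $j\ge k$, by induction on $k$.} The base $k=0$ is trivial (empty product equals $1$). For the step, write
\[
\alpha^{j}(\alpha^*)^{k+1}=\bigl(\alpha^{j}(\alpha^*)^{k}\bigr)\alpha^{*}
=\alpha^{j-k}\prod_{p=0}^{k-1}\bigl(1-q^{-2p}\beta\beta^*\bigr)\,\alpha^{*}
\]
by the inductive hypothesis, and push $\alpha^*$ through each factor using Step~1. Each factor $(1-q^{-2p}\beta\beta^*)$ becomes $(1-q^{-2(p+1)}\beta\beta^*)$ once $\alpha^*$ has moved past it, so the product reindexes from $p=0,\dots,k-1$ to $p=1,\dots,k$. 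Finally, using $\alpha\alpha^{*}=1-\beta\beta^{*}=(1-q^{0}\beta\beta^{*})$ to absorb the factor $\alpha^{j-k}\alpha^{*}=\alpha^{j-k-1}(1-\beta\beta^{*})$, the product picks up the missing $p=0$ term and we obtain the claimed formula with $k$ replaced by $k+1$. The condition $j\ge k+1$ is what makes $\alpha^{j-k-1}$ meaningful.

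\textbf{Step 3: Case $j<k$, by adjunction.} Since $\beta\beta^{*}$ is self-adjoint, each factor in the product is self-adjoint, and the factors commute pairwise; hence the whole product is self-adjoint. Taking the adjoint of the identity proved in Step~2 (applied with the pair $(k,j)$ in place of $(j,k)$, legal because $k>j$) gives
\[
\alpha^{j}(\alpha^{*})^{k}=\prod_{p=0}^{j-1}\bigl(1-q^{-2p}\beta\beta^{*}\bigr)\,(\alpha^{*})^{k-j}.
\]
Now apply Step~1 once more to carry $(\alpha^{*})^{k-j}$ leftwards through the product; each factor $(1-q^{-2p}\beta\beta^{*})$ turns into $(1-q^{-2(p+k-j)}\beta\beta^{*})$, so the product gets reindexed from $p=0,\dots,j-1$ to $p=k-j,\dots,k-1$, yielding exactly the second formula of the lemma.

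\textbf{Expected obstacle.} None of the steps are deep; the only place where a slip is easy is tracking the powers of $q$ in Step~2 and the reindexing of the product in Step~3. Writing the commutation of Step~1 in the functional form $f(\beta\beta^*)\alpha^*=\alpha^*f(q^{-2}\beta\beta^*)$ makes both shifts transparent and keeps the signs of the exponents unambiguous.
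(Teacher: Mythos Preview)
Your proof is correct and rests on exactly the same two ingredients as the paper's: the relation $\alpha\alpha^*=1-\beta\beta^*$ and the commutation $(\beta\beta^*)\alpha^*=\alpha^*(q^{-2}\beta\beta^*)$. The only organizational difference is that the paper runs a single recursion
\[
\alpha^j(\alpha^*)^k=\alpha^{j-1}(\alpha^*)^{k-1}\bigl(1-q^{-2(k-1)}\beta\beta^*\bigr),
\]
decreasing $j$ and $k$ simultaneously, which handles both cases $j\ge k$ and $j<k$ in one stroke (the recursion terminates at $\alpha^{j-k}$ or at $(\alpha^*)^{k-j}$ according to which exponent hits zero first). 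You instead peel $\alpha^*$ from the right in the case $j\ge k$ and then recover $j<k$ via adjunction and reindexing; this is a perfectly valid variant, just slightly less economical than the paper's uniform step.
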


\begin{proof}
We start by writing
\[
\alpha^j(\alpha^*)^k=\alpha^{j-1}(1-\beta\beta^*)(\alpha^*)^{k-1}=\alpha^{j-1}(\alpha^*)^{k-1}(1-q^{-2(k-1)}\beta\beta^*)
\]
and then use induction on $k$ to prove the desired result.
\end{proof}

\begin{prop}\label{prop:3}
For all $m,m'\in\N$, one has
\begin{enumerate}
\item\label{lem3:1} $(V_m)^*=V_m$,
\item\label{lem3:2} $V_m\cdot V_{m'}\subseteq V_{m+m'}$.
\end{enumerate}
\end{prop}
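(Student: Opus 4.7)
For (i) the plan is to observe that the $*$-involution permutes the basis \eqref{eq:ejkl}: a direct computation using $(AB)^* = B^*A^*$ together with the commutativity of $\beta$ and $\beta^*$ gives $(e_{j,k,l})^* = e_{-j,l,k}$ for every $j\in\Z$ and $k,l\in\N$. Since the sum $k+l$ of the last two indices is unchanged, the spanning set of $V_m$ is stable under $*$, hence $V_m^*=V_m$.

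For (ii), by bilinearity it suffices to show $e_{j,k,l}\cdot e_{j',k',l'}\in V_{(k+l)+(k'+l')}$ for all $j,j'\in\Z$ and $k,l,k',l'\in\N$, and I would proceed by case analysis on the signs of $j$ and $j'$. The core observation is that all commutation relations between $\alpha,\alpha^*$ on one hand and $\beta,\beta^*$ on the other are $q$-commutative (for instance $\alpha\beta=q^{-1}\beta\alpha$ and the adjoint relations), so moving any power of $\alpha$ or $\alpha^*$ past any monomial in $\beta,\beta^*$ produces only a scalar factor of the form $q^N$ and does not change the total degree in $\beta,\beta^*$. When $j,j'$ have the same sign, this alone reduces $e_{j,k,l}\cdot e_{j',k',l'}$ to $q^N e_{j+j',k+k',l+l'}\in V_{k+l+k'+l'}$ with no use of Lemma~\ref{lem:aas}.

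The substantive case is $j\geq 0$, $j'<0$. After commuting the middle monomial $\beta^k(\beta^*)^l\beta^{k'}(\beta^*)^{l'}=\beta^{k+k'}(\beta^*)^{l+l'}$ past $(\alpha^*)^{-j'}$, the product becomes a scalar multiple of $\alpha^j(\alpha^*)^{-j'}\cdot\beta^{k+k'}(\beta^*)^{l+l'}$. Applying Lemma~\ref{lem:aas} to $\alpha^j(\alpha^*)^{-j'}$ expresses it as either $\alpha^{j+j'}$ or $(\alpha^*)^{-j-j'}$ multiplied by a finite product of factors $(1-q^{-2p}\beta\beta^*)$; expanding the product and using $(\beta\beta^*)^s=\beta^s(\beta^*)^s$, the whole expression becomes a linear combination of basis vectors $e_{j+j',k+k'+s,l+l'+s}$ with $s\in\N$, each belonging to $V_{k+l+k'+l'+2s}\subseteq V_{(k+l)+(k'+l')}$. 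The other mixed-sign case $j<0$, $j'\geq 0$ proceeds in exactly the same way, using the companion identity for $(\alpha^*)^a\alpha^b$ that one obtains from $\alpha^*\alpha=1-q^2\beta\beta^*$ and $\alpha\alpha^*=1-\beta\beta^*$ by an induction parallel to that of Lemma~\ref{lem:aas}.

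The main obstacle is bookkeeping the scalar factors generated by the various $q$-commutations, but it is essentially cosmetic: conceptually the statement is transparent because the additional $\beta\beta^*$ factors produced by Lemma~\ref{lem:aas} can only \emph{increase} the sum of the last two indices, never decrease it, which is precisely what makes $V_\bullet$ a multiplicative filtration.
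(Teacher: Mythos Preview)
Your proposal is correct and follows essentially the same route as the paper: for (i) you use the identity $(e_{j,k,l})^*=e_{-j,l,k}$, and for (ii) you reduce by bilinearity to products of basis elements, handle the same-sign case by $q$-commutation, and treat the mixed-sign case via Lemma~\ref{lem:aas}. The only cosmetic differences are that the paper absorbs the factors $(1-q^{-2p}\beta\beta^*)$ by invoking $V_{m+m'}\cdot\beta\beta^*\subseteq V_{m+m'}$ rather than expanding them, and it dismisses the case $j<0$, $j'>0$ as ``similar'' where you (more carefully) note that a companion identity for $(\alpha^*)^a\alpha^b$ is needed.
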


\begin{proof}
\ref{lem3:1} From the defining relations of $SU_q(2)$ we see that, for all $j\in\Z$ and $k,l\in\N$, one has
\begin{equation}\label{eq:star}
(e_{j,k,l})^*=e_{-j,l,k} .
\end{equation}
Thus $(e_{j,k,l})^*\in V_m$ whenever $k+l\geq m$, i.e.~$(V_m)^*\subseteq V_m$, and then $V_m=((V_m)^*)^* \subseteq V_m^*$.

\medskip

\noindent
\ref{lem3:2}
It is enough to show that $e_{j,k,l}\cdot e_{j',k',l'}\in V_{m+m'}$ for all $j,j'\in\Z$ and $k,l,k',l'\in\N$ with $k+l\geq m$ and $k'+l'\geq m'$. If either $j$ or $j'$ is zero, or they are both non-zero and with the same sign, the thesis immediately follows from the equality
\[
e_{j,k,l}e_{j',k',l'}=q^{(\ldots)}e_{j+j',k+k',l+l'} ,
\]
where the power of $q$ in front is irrelevant.
Next, let $j>0$ and $j'<0$ (the case $j<0$ and $j'>0$ is similar and we omit it). Using Lemma \ref{lem:aas} we get
\begin{align*}
e_{j,k,l}e_{j',k',l'} &=q^{(\ldots)} \alpha^j(\alpha^*)^{-j'} e_{0,k+k',l+l'} \\
&=q^{(\ldots)} \underbrace{e_{j+j',k+k',l+l'}}_{\in V_{m+m'}}
\prod_{p=\max\{0,-j-j'\}}(1-q^{-2p}\beta\beta^*) ,
\end{align*}
where again we ignore the powers of $q$ in front coming from the commutation relations.
Since $V_{m+m'}\beta\beta^*\subseteq V_{m+m'}$, we get the thesis.
\end{proof}

\begin{rem}\label{rem:4.5}
It follows from Prop.~\ref{prop:3} that, for each $m\leq m'$, $V_{m'}$ is an ideal in $V_m$. And in particular they are all ideals in $\A(SU_q(2))$.
\end{rem}

If $A$ is an associative algebra, we denote by $[a,b]:=ab-ba$ the \emph{commutator} of two elements $a,b\in A$,
and call \emph{commutator ideal} the  ideal generated by all commutators, i.e.~the intersection of all ideals containing the set $[A,A]$.

\begin{lemma}\label{lem:6}
Let $A_1,A_2$ be algebras and denote by $J_1$ (resp.~$J_2$) the commutator ideal of $A_1$ (resp.~$A_2$).
If $\varphi:A_1\to A_2$ is a homomorphism, then $\varphi(J_1)\subseteq J_2$ and there is an induced
homomorphism
\[
[\varphi]:A_1/J_1\to A_2/J_2
\]
between quotient algebras defined by $[\varphi](a+J_1):=\varphi(a)+J_2$ for all $a\in A_1$.
Moreover, if $\varphi$ is surjective, then $[\varphi]$ is surjective as well.
\end{lemma}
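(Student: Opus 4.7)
The plan is to verify the three claims in sequence by straightforward universal-property arguments, using only that $\varphi$ is a homomorphism (so it preserves products, sums, and in particular commutators) and that $J_1, J_2$ are defined as the ideals generated by all commutators.

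For the inclusion $\varphi(J_1)\subseteq J_2$, I would first observe that for every $a,b\in A_1$ we have
\[
\varphi([a,b]) = [\varphi(a),\varphi(b)] \in [A_2,A_2] \subseteq J_2.
\]
Since $J_1$ is generated as an ideal by $[A_1,A_1]$, its elements are finite sums of the form $\sum_i x_i [a_i,b_i] y_i$ with $x_i,y_i\in A_1$. Applying $\varphi$ and using multiplicativity gives $\sum_i \varphi(x_i)[\varphi(a_i),\varphi(b_i)]\varphi(y_i)$, which lies in $J_2$ because $J_2$ is a two-sided ideal containing $[\varphi(a_i),\varphi(b_i)]$.

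For the induced map $[\varphi]$, the well-definedness follows immediately: if $a+J_1=a'+J_1$, then $a-a'\in J_1$, so $\varphi(a)-\varphi(a')=\varphi(a-a')\in J_2$ by the previous step, hence $\varphi(a)+J_2=\varphi(a')+J_2$. The map is then a homomorphism because the quotient projections are homomorphisms and $\varphi$ is; multiplicativity, additivity, unitality, and compatibility with the $*$-involution are all inherited from $\varphi$.

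For surjectivity, suppose $\varphi$ is surjective and let $b+J_2\in A_2/J_2$. Pick $a\in A_1$ with $\varphi(a)=b$; then $[\varphi](a+J_1)=b+J_2$. There is essentially no obstacle here — the whole lemma is a formal check, and the only point that requires care is confirming that the ideal generated by commutators really is mapped into the target commutator ideal, which is immediate because $\varphi$ sends commutators to commutators and preserves the two-sided ideal structure.
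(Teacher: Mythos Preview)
Your proof is correct and follows exactly the same route as the paper's: observe that $\varphi$ sends commutators to commutators, deduce $\varphi(J_1)\subseteq J_2$, then note that $[\varphi]$ is well-defined, a homomorphism, and surjective when $\varphi$ is. You have simply written out in full the details that the paper leaves implicit.
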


\begin{proof}
For the first claim, it is enough to observe that $\varphi([a,b])=[\varphi(a),\varphi(b)]$ for all $a,b\in A_1$.
The inclusion $\varphi(J_1)\subseteq J_2$ implies that the induced map $[\varphi]$ is well-defined, and it is clearly a homomorphism. Assume that $\varphi$ is surjective and let $c\in A_2$. Then $c=\varphi(a)$ for some $a\in A_1$ and
$c+J_2=[\varphi](a+J_1)$, proving surjectivity of $[\varphi]$.
\end{proof}

\begin{lemma}\label{lemma:commid}
Let $0\leq q<1$.
The commutator ideal $I_n$ of $\A(S^{2n+1}_q)$ is generated by the elements $z_1,\ldots,z_n$. The quotient algebra $\A(S^{2n+1}_q)/I_n$ is generated by the unitary $[z_0]$ and is isomorphic to $\A(S^1)$.
\end{lemma}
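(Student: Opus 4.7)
Let $J_n$ denote the *-ideal of $\A(S^{2n+1}_q)$ generated by $z_1,\ldots,z_n$. My plan is to prove $I_n=J_n$ by showing each inclusion separately, and then read off the quotient description.

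The inclusion $I_n\subseteq J_n$ is the easier one. Setting $z_i=z_i^*=0$ for all $i\geq 1$ collapses the defining relations \eqref{eq:qsphereA}--\eqref{eq:qsphereE}: relations \eqref{eq:qsphereA}, \eqref{eq:qsphereB} and relation \eqref{eq:qsphereC} for $i\geq 1$ become trivial, \eqref{eq:qsphereC} at $i=0$ becomes $z_0^*z_0=z_0z_0^*$ (the sum on the right is now empty), and \eqref{eq:qsphereE} becomes $z_0z_0^*=1$. Hence the quotient $\A(S^{2n+1}_q)/J_n$ is the universal *-algebra generated by a single unitary, which is $\A(S^1)$. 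In particular the quotient is commutative, so by the universal property of the commutator ideal we get $I_n\subseteq J_n$, and once the opposite inclusion is established this same computation will deliver the identification $\A(S^{2n+1}_q)/I_n\cong\A(S^1)$ with $[z_0]$ the distinguished unitary generator.

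For the reverse inclusion $J_n\subseteq I_n$ the task is to show $z_i\in I_n$ for every $i\geq 1$. First I would harvest ``quadratic'' elements of $I_n$ directly from the defining relations seen as commutators. Rewriting \eqref{eq:qsphereA} as $[z_j,z_i]=(q-1)z_iz_j$ for $i<j$, and \eqref{eq:qsphereB} as $[z_i^*,z_j]=(q-1)z_jz_i^*$ for $i\neq j$, and using $q\neq 1$, one obtains $z_az_b\in I_n$ and $z_az_b^*\in I_n$ for all $a\neq b$. Next, \eqref{eq:qsphereC} read as $[z_i^*,z_i]=(1-q^2)\sum_{j=i+1}^n z_jz_j^*$ gives $\sum_{j>i}z_jz_j^*\in I_n$, and a descending induction on $i$ starting from $i=n-1$ yields $z_iz_i^*\in I_n$ for every $i\geq 1$.

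The decisive step promotes these quadratic facts to the linear statement by using the sphere relation \eqref{eq:qsphereE}. Multiplying $\sum_{j=0}^n z_jz_j^*=1$ on the left by $z_i$ (with $i\geq 1$) gives
\[
z_i \;=\; z_i^{\,2}z_i^*\;+\;\sum_{j\neq i}(z_iz_j)z_j^*.
\]
Every summand in the sum lies in $I_n$ because $z_iz_j\in I_n$ for $j\neq i$, and $z_i^{\,2}z_i^*=z_i\cdot(z_iz_i^*)\in I_n$ because $z_iz_i^*\in I_n$ and $I_n$ is an ideal. Therefore $z_i\in I_n$, and by *-closure $z_i^*\in I_n$, giving $J_n\subseteq I_n$ and completing the proof.

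I expect this last bootstrapping move to be the only nontrivial step: extracting the quadratic elements from commutators is routine, and the identification of the quotient with $\A(S^1)$ is just a clean-up of the relations, but passing from $z_iz_i^*\in I_n$ to $z_i\in I_n$ genuinely requires combining the sphere relation with the $q$-commutation of distinct generators and would fail, for instance, in the absence of \eqref{eq:qsphereE}.
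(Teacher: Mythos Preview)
Your proof is correct and follows essentially the same strategy as the paper's: both establish the two inclusions between $I_n$ and the ideal generated by $z_1,\dots,z_n$ by extracting the quadratic elements $z_az_b^*$, $z_iz_i^*$ from commutators and then bootstrapping via the sphere relation \eqref{eq:qsphereE} to obtain $z_i\in I_n$. The paper multiplies \eqref{eq:qsphereE} on the right by $z_i$ and uses only \eqref{eq:qsphereB}, whereas you multiply on the left and also invoke \eqref{eq:qsphereA}; these are cosmetic variations.

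The one substantive difference is in identifying the quotient with $\A(S^1)$. You argue directly via universal properties: imposing $z_i=0$ for $i\geq 1$ reduces the presentation to that of a single unitary, so the quotient \emph{is} the universal algebra on one unitary. The paper instead constructs the map $\phi:\A(S^1)\to\A(S^{2n+1}_q)/I_n$, $u\mapsto [z_0]$, and then proves injectivity by evaluating against the family of characters $\chi_\lambda$, $\lambda\in U(1)$, to force all coefficients of a putative kernel element to vanish. Your route is shorter and avoids the character computation; the paper's route has the minor advantage of being explicit about why no hidden relation on $[z_0]$ can arise, but both are valid.
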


\begin{proof}
Let $I_n'$ be the ideal generated by $z_1,\ldots,z_n$.
From \eqref{eq:qsphereB} it follows that, for all $i\neq j$, the element
\[
z_jz_i^*=(1-q)^{-1}[z_j,z_i^*]
\]
belongs to $I_n$, which means that $z_i^*z_j=z_jz_i^*+[z_i^*,z_j]\in I_n$ as well.
From \eqref{eq:qsphereC} we see that
\[
x_i:=\sum_{j\geq i}z_jz_j^*=(1-q^2)^{-1}[z_{i-1}^*,z_{i-1}]
\]
belongs to $I_n$ for all $1\leq i\leq n$. So, $z_iz_i^*=x_{i}-x_{i+1}\in I_n$ for all $1\leq i<n$, and $z_nz_n^*=x_n\in I_n$ as well.
From \eqref{eq:qsphereE}, for all $i \neq 0$,
\[
z_i=\sum_{j=0}^nz_jz_j^*z_i
\]
belongs to $I_n$, since $z_j^*z_i\in I_n$ for all $j\neq i$ and $z_iz_i^*\in I_n$. Thus,
\[
I_n'\subseteq I_n .
\]
We now study the quotient map $\pi:\A(S^{2n+1}_q)\to\A(S^{2n+1}_q)/I_n'$.
From the defining relations it follows that $\A(S^{2n+1}_q)/I_n'$ is commutative, hence
all the commutators are contained in the kernel of $\pi$, that is
\[
I_n\subseteq\ker(\pi)=I_n' .
\]
Thus, $I_n=I_n'$, and this proves the first claim of the lemma.

We now pass to the second claim. The quotient algebra is commutative and generated by $[z_0]$ and $[z_0]^*=[z_0^*]$, with relation
\[
[z_0][z_0]^*=[z_0z_0^*]=[z_0z_0^*+x_1]=[1] .
\]
Thus, there is a homomorphism $\phi:\A(S^1)\to\A(S^{2n+1}_q)/I_n$ that maps the unitary generator $u$ of the first algebra (the identity function on $S^1\subset\C$) to $[z_0]$. We must show that $\phi$ is an isomorphism.

For every $\lambda\in U(1)$ we have a character $\chi_\lambda$ of $\A(S^{2n+1}_q)$ defined by $\chi_\lambda(z_0)=\lambda$ and $\chi_\lambda(z_i)=0$ for $i>0$. This induces a homomorphism
$[\chi_\lambda]:\A(S^{2n+1}_q)/I_n\to\C$. Let $a=\sum_ka_ku^k\in \A(S^1)$, with $a_k\in\C$ and only finitely many coefficients different from zero. Assume that $\phi(a)=0$. Then, $a$ is in the kernel of $f_\lambda:=[\chi_\lambda]\circ\phi$. But
$f_\lambda(a)=\sum\nolimits_k a_k\lambda^k=0$ for all $\lambda\in U(1)$ implies that the polynomial
$P(t):=\sum\nolimits_k a_kt^k\in\C[t]$ has infinitely many zeroes, hence $P(t)=0$. This means $a_k=0$ for all $k$, so that $a=0$ and $\phi$ is injective.

The image of $\phi$ contains the generator $[z_0]$ of $\A(S^{2n+1}_q)/I_n$, hence $\phi$ is also surjective.
\end{proof}

\begin{cor}\label{cor}
Let $q\neq 0$. Then, the commutator ideal of $\A(SU_q(2))$ is $I_1=V_1$.
\end{cor}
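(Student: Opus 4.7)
The plan is to prove both inclusions $I_1 \subseteq V_1$ and $V_1 \subseteq I_1$ separately, using the structural facts already established.

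For $I_1 \subseteq V_1$, I would first invoke Lemma \ref{lemma:commid} (specialized to $n = 1$, where the generators $z_0, z_1$ of $\A(S^3_q)$ are identified with $\alpha, \beta$ of $\A(SU_q(2))$): this tells us that $I_1$ is the ideal generated by $\beta$ and $\beta^*$. Then I would observe that $\beta = e_{0,1,0}$ and $\beta^* = e_{0,0,1}$, both of which lie in $V_1$ since $k+l = 1$ in each case. Because $V_1$ is an ideal in $\A(SU_q(2))$ by Remark \ref{rem:4.5}, it must contain the ideal generated by $\beta, \beta^*$, giving $I_1 \subseteq V_1$.

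For the reverse inclusion $V_1 \subseteq I_1$, it suffices to check that every basis element $e_{j,k,l}$ with $k + l \geq 1$ belongs to $I_1$. This is immediate from the definition \eqref{eq:ejkl}: such an element contains at least one factor of $\beta$ or $\beta^*$, hence lies in the (two-sided) ideal generated by $\beta$ and $\beta^*$, which by the first part is $I_1$.

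I do not anticipate any real obstacle here: the corollary is essentially a matter of matching two descriptions of the same ideal. The only mildly non-trivial ingredients, namely that $I_1$ is generated by $\beta, \beta^*$ and that $V_1$ is an ideal, have already been proved in Lemma \ref{lemma:commid} and Remark \ref{rem:4.5} respectively. The linear-independence of the family $(e_{j,k,l})$ for $q \neq 0$ is what lets us read off membership in $V_1$ directly from the basis expansion, which is why the hypothesis $q \neq 0$ is needed.
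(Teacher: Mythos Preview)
Your proposal is correct and matches the paper's own argument essentially line for line: both directions are handled exactly as you describe, invoking Lemma~\ref{lemma:commid} for $n=1$ to identify $I_1$ as the ideal generated by $\beta,\beta^*$, using that $V_1$ is an ideal for $I_1\subseteq V_1$, and factoring out a $\beta$ or $\beta^*$ from each $e_{j,k,l}$ with $k+l\geq 1$ for $V_1\subseteq I_1$. The only cosmetic difference is that the paper writes the factorization explicitly as $e_{j,k,l}=q^{(\ldots)}e_{j,k-1,l}\beta$ (resp.\ $q^{(\ldots)}e_{j,k,l-1}\beta^*$), whereas you phrase it as ``contains at least one factor of $\beta$ or $\beta^*$''.
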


\begin{proof}
Lemma \ref{lemma:commid} for $n=1$, after a change of notations, tells us that the commutator ideal of $\A(SU_q(2))$ 
is generated by $\beta$ and $\beta^*$. Since $\beta=e_{0,1,0}$ and $\beta^*=e_{0,0,1}$ belong to $V_1$, one has $I_1\subseteq V_1$. On the other hand, for all $j,k,l$ with $k+l\geq 1$, one has $e_{j,k,l}=q^{(\ldots)}e_{j,k-1,l}\beta$ if $k\geq 1$ or $e_{j,k,l}=q^{(\ldots)}e_{j,k,l-1}\beta^*$ if $j\geq 1$ (where the power of $q$ is irrelevant). In either case, $e_{j,k,l}\in I_1$, hence $V_1\subseteq I_1$.
\end{proof}

\begin{lemma}\label{lemma:everyunitary}
Let $u$ denote the unitary generator of $\A(S^1)$. Then, every unitary element of the algebra $\A(S^1)$ is of the form $\lambda u^j$ for some $\lambda\in U(1)$ and $j\in\Z$. In particular, every unitary generator of $\A(S^1)$ has either the form $\lambda u$ or $\lambda u^*$, for some $\lambda\in U(1)$.
\end{lemma}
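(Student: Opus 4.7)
The plan is to work with the concrete presentation $\A(S^1)=\C[u,u^{-1}]$, the algebra of Laurent polynomials in the unitary generator $u$, so every $a\in\A(S^1)$ has a unique expansion $a=\sum_{k\in\Z}a_k u^k$ with finitely many nonzero coefficients $a_k\in\C$. The $*$-involution acts by $(u^k)^*=u^{-k}$, so $a^*=\sum_k\overline{a_k}u^{-k}$.

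Given a nonzero $a$, let $m\leq M$ be the smallest and largest integers with $a_m,a_M\neq 0$. A direct computation shows that in the expansion of $aa^*$, the coefficient of $u^{M-m}$ equals $a_M\overline{a_m}$ and the coefficient of $u^{m-M}$ equals $a_m\overline{a_M}$; both are nonzero. If $a$ is unitary, then $aa^*=1$, which forces $M-m=0$, hence $M=m$, and $|a_m|^2=a_m\overline{a_m}=1$. Therefore $a=\lambda u^j$ with $j:=m\in\Z$ and $\lambda:=a_m\in U(1)$. This proves the first claim.

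For the second claim, suppose $v$ is a unitary element of $\A(S^1)$ that generates it as a $*$-algebra. By the first part, $v=\lambda u^j$ for some $\lambda\in U(1)$ and $j\in\Z$. The $*$-subalgebra generated by $v$ (and hence by $v,v^*$) consists of polynomial expressions in $v$ and $v^*=\overline{\lambda}u^{-j}$, and is therefore contained in $\C[u^j,u^{-j}]$. For this to equal $\C[u,u^{-1}]$ the generator $u$ must lie in $\C[u^j,u^{-j}]$, which by comparing the supports in $\Z$ of both sides forces $|j|=1$. Thus $v=\lambda u$ or $v=\lambda u^*$, completing the proof.

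The argument is essentially routine; the only step that requires a moment of care is reading off the extreme coefficients of $aa^*$ to conclude that the support of $a$ reduces to a single power of $u$, and I expect no genuine obstacle.
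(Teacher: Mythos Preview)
Your proof is correct and follows essentially the same route as the paper's: both identify the extreme coefficient of $aa^*$ (the paper singles out the term $a_{i_0}a_{i_1}^*u^{i_0-i_1}$, you note both $a_M\overline{a_m}$ and its conjugate) to force the support of $a$ to be a single power of $u$. The only difference is that the paper dismisses the second claim as ``obvious'', whereas you supply the short argument that the $*$-subalgebra generated by $\lambda u^j$ lies in $\C[u^j,u^{-j}]$ and hence cannot contain $u$ unless $|j|=1$.
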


\begin{proof}
Let $a=\sum_ka_ku^k$ be a non-zero element in $\A(S^1)$, with $a_k\in\C$ for all $k$ and only finitely many non-zero coefficients.
Call $i_0:=\min\{ i : a_i\neq 0 \}$ and 
$i_1:=\max\{i : a_i\neq 0 \}$. Then
\begin{align*}
aa^* &=\left(\sum\nolimits_{k=i_0}^{i_1}a_ku^k\right)\left(\sum\nolimits_{k=-i_1}^{-i_0}a_{-k}^*u^k\right) \\
&=a_{i_0}a_{i_1}^*u^{i_0-i_1}+\sum\nolimits_{k=i_0-i_1+1}^{i_1-i_0}(\ldots)u^k .
\end{align*}
If $aa^*=1$, since $a_{i_0}a_{i_1}^*\neq 0$ by construction, from the previous equality we deduce that $i_0=i_1$. Hence
$a=a_{i_0}u^{i_0}$. This proves the first claim. The second claim is obvious.
\end{proof}

\begin{lemma}\label{lem:10}
Let \mbox{$0\leq q<1$}, \mbox{$0<q'<1$} and let $\varphi:\A(S^{2n+1}_q)\to\A(SU_{q'}(2))$ be a surjective homomorphism.
Then, either
\begin{subequations}
\begin{equation}\label{eq:morA}
\varphi(z_0) =\lambda \alpha+x
\end{equation}
or
\begin{equation}\label{eq:morB}
\varphi(z_0) =\lambda \alpha^*+x
\end{equation}
\end{subequations}
for some $\lambda\in U(1)$ and $x\in V_1$.
\end{lemma}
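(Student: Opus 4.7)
My plan is to exploit the following observation: both sides of $\varphi$ have the same commutator quotient, namely $\A(S^1)$. Indeed, Lemma \ref{lemma:commid} identifies $\A(S^{2n+1}_q)/I_n$ with $\A(S^1)$, with $[z_0]$ playing the role of the unitary generator. The same lemma, applied in the case $n=1$ after renaming generators (and reconciled with the filtration picture via Corollary \ref{cor}), gives $\A(SU_{q'}(2))/V_1\cong \A(S^1)$ with unitary generator $[\alpha]$. I would then invoke Lemma \ref{lem:6} to push $\varphi$ down to a surjective *-homomorphism
\[
[\varphi]\colon \A(S^{2n+1}_q)/I_n\longrightarrow \A(SU_{q'}(2))/V_1,
\]
which under the above identifications is a surjective homomorphism $\A(S^1)\to\A(S^1)$ sending $[z_0]$ to $[\varphi(z_0)]$.

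From here the argument is short. Since $[z_0]$ is a unitary generator of the source $\A(S^1)$, its image $[\varphi(z_0)]$ is a unitary element of the target $\A(S^1)$, and by surjectivity of $[\varphi]$ it actually generates this target as a *-algebra. The second part of Lemma \ref{lemma:everyunitary} then forces
\[
[\varphi(z_0)]=\lambda\,[\alpha] \quad\text{or}\quad [\varphi(z_0)]=\lambda\,[\alpha^*]
\]
for some $\lambda\in U(1)$. Translating the congruence modulo $V_1$ back into $\A(SU_{q'}(2))$, I get $\varphi(z_0)-\lambda\alpha\in V_1$ or $\varphi(z_0)-\lambda\alpha^*\in V_1$, which is the desired dichotomy.

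The proof is mostly assembly of previously established facts, so there is no serious obstacle; the only step worth writing out carefully is that Lemma \ref{lemma:commid} indeed applies to $\A(SU_{q'}(2))$, since its defining relations coincide with those of $\A(S^3_{q'})$ modulo the change of notation $z_0\leftrightarrow\alpha$, $z_1\leftrightarrow\beta$. The hypothesis that $\varphi$ is surjective is essential in precisely one place, namely to upgrade the conclusion "$[\varphi(z_0)]$ is a unitary of $\A(S^1)$" (which would only yield $\lambda u^j$ for some $j\in\Z$ via the first part of Lemma \ref{lemma:everyunitary}) to "$[\varphi(z_0)]$ is a unitary generator", so that the second, stronger part of the lemma applies and $j$ is pinned down to $\pm 1$.
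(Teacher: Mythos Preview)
Your proposal is correct and follows essentially the same approach as the paper: pass to the commutator quotients via Lemma~\ref{lem:6}, identify both with $\A(S^1)$ through Lemma~\ref{lemma:commid} and Corollary~\ref{cor}, apply Lemma~\ref{lemma:everyunitary} to the unitary generator $[\varphi(z_0)]$, and lift the resulting congruence back to $\A(SU_{q'}(2))$. The paper's proof is the same sequence of steps, phrased only slightly differently (it names the commutator ideal $J_2$ first and invokes $J_2=V_1$ at the end, whereas you fold that identification in at the start).
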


\begin{proof}
Denote by $J_1$ the commutator ideal of $A_1:=\A(S^{2n+1}_q)$ and by $J_2$ the commutator ideal of $A_2:=\A(SU_{q'}(2))$. Recall that $A_1/J_1\cong\A(S^1)$ is generated by the unitary $[z_0]$ and
$A_2/J_2\cong\A(S^1)$ is generated by the unitary $u:=[\alpha]$ (cf.~Lemma \ref{lemma:commid}).
The induced map $[\varphi]:A_1/J_1\to A_2/J_2$ is surjective (cf.~Lemma \ref{lem:6}), which means that $[\varphi(z_0)]$ is a unitary generator of $A_2/J_2$.
From Lemma \ref{lemma:everyunitary} it follows that either $[\varphi(z_0)]=\lambda u=\lambda [\alpha]$ or 
$[\varphi(z_0)]=\lambda u^*=\lambda [\alpha^*]$, for some \mbox{$\lambda\in U(1)$}. Hence, either
$\varphi(z_0)=\lambda\alpha+x$ or 
$\varphi(z_0)=\lambda\alpha^*+x$ for some $\lambda\in U(1)$ and $x\in J_2$. But $J_2=V_1$ (cf.~Corollary \ref{cor}), hence the thesis.
\end{proof}

\begin{lemma}~\label{lem:finally}
Let \mbox{$0\leq q<1$} and \mbox{$0<q'<1$}.
If $q$ is not a positive power of $q'$, then any homomorphism $\A(S^{2n+1}_q)\to\A(SU_{q'}(2))$ vanishes on the commutator ideal $I_n$.
\end{lemma}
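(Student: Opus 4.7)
The plan is to show $\varphi(z_j) = 0$ for every $j = 1, \ldots, n$; this suffices, since by Lemma \ref{lemma:commid} the ideal $I_n$ is generated by these elements and their adjoints. Writing $y_j := \varphi(z_j)$, I aim to prove by induction on $m \geq 1$ that $y_j \in V_m$; then $y_j \in \bigcap_m V_m = 0$ by \eqref{eq:intersection}.

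The base case $m=1$ is immediate: by Lemma \ref{lem:6} combined with Corollary \ref{cor}, one has $\varphi(I_n) \subseteq V_1$, so $y_j \in V_1$. It is also useful to describe $\varphi(z_0)$: its class is a unitary in $\A(SU_{q'}(2))/V_1 \cong \A(S^1)$, so Lemma \ref{lemma:everyunitary} forces $\varphi(z_0) = \mu \alpha^{(M)} + x$ for some $\mu \in U(1)$, $M \in \Z$, $x \in V_1$, where $\alpha^{(M)}$ denotes $\alpha^M$ if $M \geq 0$ and $(\alpha^*)^{-M}$ if $M < 0$.

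For the inductive step, apply $\varphi$ to the defining relation $z_j z_0 = q z_0 z_j$ to obtain $y_j \varphi(z_0) = q \varphi(z_0) y_j$. Assuming $y_j \in V_m$, the cross terms involving $x$ lie in $V_1 \cdot V_m \subseteq V_{m+1}$ by Prop.~\ref{prop:3}, so
\[
y_j \alpha^{(M)} \equiv q \, \alpha^{(M)} y_j \pmod{V_{m+1}}.
\]
The key structural fact is a bimodule identity on the associated graded: iterating the basic commutations $\alpha\beta = (q')^{-1}\beta\alpha$ and $\alpha\beta^* = (q')^{-1}\beta^*\alpha$, and using $\alpha\alpha^* \equiv \alpha^*\alpha \equiv 1 \pmod{V_2}$ (from Lemma \ref{lem:aas}), one verifies that $\alpha v \equiv (q')^{-m} v \alpha \pmod{V_{m+1}}$ for every $v \in V_m$; taking adjoints gives $\alpha^* v \equiv (q')^m v \alpha^* \pmod{V_{m+1}}$. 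Iterating yields, uniformly in $M \in \Z$, the identity $\alpha^{(M)} y_j \equiv (q')^{-mM} y_j \alpha^{(M)} \pmod{V_{m+1}}$ (using $m|M| = -mM$ when $M<0$).

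Combining the two congruences gives $(1 - q(q')^{-mM}) y_j \alpha^{(M)} \equiv 0 \pmod{V_{m+1}}$. The scalar is nonzero in every case: for $M>0$ its vanishing would force $q = (q')^{mM}$, excluded by hypothesis; for $M=0$ it equals $1 - q > 0$; for $M<0$ we have $q(q')^{-mM} < q < 1$ since $-mM > 0$. Finally, right multiplication by $\alpha^{(M)}$ is injective on $V_m/V_{m+1}$ because it sends each $e_{i,k,l}$ to a nonzero scalar multiple of $e_{i+M,k,l}$, so $y_j \in V_{m+1}$, closing the induction. I expect the main obstacle to be the uniform verification of the bimodule identity, which requires a short case analysis on the sign of $i$ in $e_{i,k,l}$ to confirm that the ratio between the left and right actions of $\alpha$ is the same scalar $(q')^{-m}$ in all cases.
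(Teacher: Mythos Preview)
Your argument is correct and follows the same inductive strategy as the paper: show $\varphi(z_j)\in V_m$ for all $m$ by exploiting the relation $z_jz_0=qz_0z_j$ and the filtration $(V_m)$, reducing modulo $V_{m+1}$ at each step. The computations you sketch (the bimodule identity $\alpha v\equiv (q')^{-m}v\alpha$ on $V_m/V_{m+1}$, its adjoint version, and the injectivity of right multiplication by $\alpha^{(M)}$ on the graded piece) are all sound.

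The one genuine difference from the paper is that you do \emph{not} invoke Lemma~\ref{lem:10}: you only use that $[\varphi(z_0)]$ is a unitary in $\A(SU_{q'}(2))/V_1\cong\A(S^1)$, hence of the form $\mu u^M$ for some $M\in\Z$, and then treat all $M$ uniformly. The paper instead appeals to Lemma~\ref{lem:10} to force $M=\pm 1$, but that lemma is stated only for \emph{surjective} $\varphi$, whereas Lemma~\ref{lem:finally} is stated for an arbitrary homomorphism. So your version actually matches the generality of the statement, while the paper's proof, read literally, covers only the surjective case (which is all that is needed for Theorem~\ref{thm:main}). The price you pay is the extra case analysis on the sign of $M$ and the injectivity step for $\alpha^{(M)}$; the paper's explicit coefficient computation for $M=\pm 1$ avoids that but at the cost of the unstated surjectivity hypothesis.
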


\begin{proof}
Let $\varphi:\A(S^{2n+1}_q)\to\A(SU_{q'}(2))$ be a homomorphism.
Fix $1\leq i\leq n$ and let $\varphi(z_i)=:y$. For $\varphi(z_0)$ we have the two cases in Lemma \ref{lem:10}.
Since $z_i$ is in the commutator ideal (Lemma \ref{lemma:commid}), $y$ must be in the commutator ideal (Lemma \ref{lem:6}), which means $y\in V_1$. Under the hypotheses of the Lemma, we shall prove by induction on $m\geq 1$ that
\begin{equation}\label{eq:indhyp}
y\in V_m \quad\Longrightarrow\quad y\in V_{m+1} .
\end{equation}
Thus, $y\in\bigcap_{m\geq 0}V_m$, and from \eqref{eq:intersection} it follows that $y=0$.
Since $\varphi(z_i)=0$ for all $1\leq i\leq n$ and $z_1,\ldots,z_n$ generate $I_n$, we get the desired result that $\varphi(I_n)=0$.

Now we prove the implication \eqref{eq:indhyp}. Assume that $y\in V_m$ (inductive hypothesis), i.e.~that
\begin{equation}\label{eq:becomes}
y=\sum_{k+l\geq m}y_{j,k,l}e_{j,k,l}
\end{equation}
for some $y_{j,k,l}\in\C$. We distinguish two cases.

\medskip

\noindent
{\bf (Case I)} Assume that  $\varphi$ satisfies \eqref{eq:morA}. Since $\varphi$ is a homomorphism,
\[
0=\varphi(z_iz_0-qz_0z_i)=
\varphi(z_n)\varphi(z_0)-q\varphi(z_0)\varphi(z_n)
=\lambda(y\alpha-q\alpha y)+\underbrace{yx-qxy}_{\in V_{m+1}} .
\]
Now, $x\in V_1$ and $y\in V_m$ imply that the underbraced element belongs to $V_{m+1}$ and, since the sum is zero, it must be also
\[
y\alpha-q\alpha y \in V_{m+1} .
\]
We compute the $q$-commutator (recall that $q$ can be $0$, but $q'\neq 0$):
\begin{align*}
y\alpha-q\alpha y ={} &\phantom{+}\sum_{\substack{j\geq 0\\[1pt] k+l\geq m}}
(q'^{k+l}-q)y_{j,k,l}e_{j+1,k,l} \\
&{+}\sum_{\substack{j<0\\[1pt] k+l\geq m}}y_{j,k,l}e_{0,k,l}
\Big((\alpha^*)^{-j}\alpha-qq'^{-k-l}\alpha(\alpha^*)^{-j}\Big) .
\end{align*}
By induction on $j$ one proves that $\alpha(\alpha^*)^{-j}$ and $(\alpha^*)^{-j}\alpha$ are both equal to $(\alpha^*)^{-j-1}$ modulo $V_2$, so
\[
y\alpha-q\alpha y =\!\!\sum_{\substack{j\geq 0\\[1pt] k+l\geq m}}
(q'^{k+l}-q)y_{j,k,l}e_{j+1,k,l}
+\!\!\sum_{\substack{j<0\\[1pt] k+l\geq m}}(1-qq'^{-k-l})y_{j,k,l}e_{j+1,k,l}
 \mod V_{m+2}
\]
This belongs to $V_{m+1}$ if and only if the components $(j,k,l)$ with $k+l=m$ vanish, that means
\[
(q'^m-q)y_{j,k,l}=0 \;\forall\;j\in\Z, k+l=m .
\]
Since $q$ is not a power of $q'$, we get $y_{j,k,l}=0$ for all $j\in\Z$ and $k+l=m$.  Thus, \eqref{eq:becomes} becomes
\[
y=\sum_{k+l\geq m+1}y_{j,k,l}e_{j,k,l} ,
\]
which proves that $y\in V_{m+1}$.

\medskip

\noindent
{\bf (Case II)} Assume $\varphi$ satisfies \eqref{eq:morB}. We start again from
\[
0=\varphi(z_iz_0-qz_0z_i)=
\varphi(z_n)\varphi(z_0)-q\varphi(z_0)\varphi(z_n)
=\lambda(y\alpha^*-q\alpha^* y)+\underbrace{yx-qxy}_{\in V_{m+1}} ,
\]
which now implies
\[
y\alpha^*-q\alpha^* y \in V_{m+1} .
\]
We compute the $q$-commutator:
\begin{align*}
y\alpha^*-q\alpha^* y ={} &\phantom{+}\sum_{\substack{j\leq 0\\[1pt] k+l\geq m}}
(1-qq'^{k+l})y_{j,k,l}e_{j-1,k,l} \\
&{+}\sum_{\substack{j>0\\[1pt] k+l\geq m}}y_{j,k,l}\Big(q'^{-k-l}\alpha^j\alpha^*-q\alpha^*\alpha^j\Big) 
e_{0,k,l} .
\end{align*}
By induction on $j$ one proves that $\alpha^j\alpha^*$ and $\alpha^*\alpha^j$ are both equal to $\alpha^{j-1}$ modulo $V_2$,
hence
\begin{multline*}
y\alpha^*-q\alpha^* y =\!\!\sum_{\substack{j\leq 0\\[1pt] k+l\geq m}}
(1-qq'^{k+l})y_{j,k,l}e_{j-1,k,l} \\
+\!\!\sum_{\substack{j>0\\[1pt] k+l\geq m}}(q'^{-k-l}-q) y_{j,k,l}e_{j-1,k,l}  \mod V_{m+2} .
\end{multline*}
This belongs to $V_{m+1}$ if and only if the components $(j,k,l)$ with $k+l=m$ vanish, that means
\[
(q'^{-m}-q)y_{j,k,l}=0 \;\forall\;j\in\Z, k+l=m .
\]
Since $q'^{-m}>1$ and $q<1$, one has $q'^{-m}-q\neq 0$. Hence, it must be
$y_{j,k,l}=0$  for all $j\in\Z$ and $k+l=m$, which means $y\in V_{m+1}$.
\end{proof}

We are now ready to prove the main theorem.

\begin{proof}[\protect{Proof of Theorem \ref{thm:main}}]
Since the statement is symmetric under the exchange of $q$ with $q'$, it is enough to consider the following two cases: \mbox{$0=q<q'<1$} and \mbox{$0<q'<q<1$}. In both cases, $q'$ is not zero, and $q$ is not a positive power of $q'$:
in the first case the statement is obvious, and in the second case it follows from  \mbox{$q'^m\leq q'<q$} for all $m\in\Z_+$.
By contradiction, assume that there exists an isomorphism 
$\A(S^{2n+1}_q)\to\A(S^{2n+1}_{q'})$. Composed with the quotient map
$\A(S^{2n+1}_{q'})\to\A(SU_{q'}(2))$ this gives a surjective homomorphism
\[
\varphi:\A(S^{2n+1}_q)\to\A(SU_{q'}(2)).
\]
It follows from Lemma \ref{lem:finally} that $\varphi$ vanishes on the commutator ideal $I_n$, hence it induces a surjective homomorphism
\[
\varphi:\A(S^{2n+1}_q)/I_n\to\A(SU_{q'}(2)).
\]
This is impossible, since $\A(S^{2n+1}_q)/I_n\cong\A(S^1)$ is commutative, while $\A(SU_{q'}(2))$ is not.
\end{proof}

\medskip

\begin{center}
\textsc{Acknowledgements}
\end{center}

\noindent
The Author is a member of INdAM-GNSAGA (Istituto Nazionale di Alta Matematica ``F.~Severi'') -- Unit\`a di Napoli and of INFN -- Sezione di Napoli.
%
%\bigskip
%
%\begin{center}
%\textsc{Declarations}
%\end{center}
%
%\noindent\textbf{Ethical Approval.}
%Not applicable.
% 
%\smallskip
%
%\noindent\textbf{Funding.}
This work is partially supported by the University of Naples Federico II under the grant FRA 2022 \emph{GALAQ: Geometric and ALgebraic Aspects of Quantization}.
%
%\smallskip
%
%\noindent\textbf{Availability of data and materials.}
%Not applicable.

\medskip

\end{document}